   \def\MR#1{}
\numberwithin{equation}{section}
\theoremstyle{plain}
\newtheorem{theorem}[equation]{Theorem}
\newtheorem{corollary}[equation]{Corollary}
\newtheorem{proposition}[equation]{Proposition}
\newtheorem*{namedtheorem}{\theoremname}
\newcommand{\theoremname}{testing}
\newenvironment{named}[1]{\renewcommand{\theoremname}{#1}\begin{namedtheorem}}{\end{namedtheorem}}
\theoremstyle{definition}
\newtheorem{definition}[equation]{Definition}
\newtheorem{remark}[equation]{Remark}
\newtheorem{question}[equation]{Question}
\newtheorem{problem}[equation]{Problem}
\definecolor{MyCyan}{HTML}{00F9DE}
\setlist{nolistsep,leftmargin=*}
\long\def\@savemarbox#1#2{\global\setbox#1\vtop{\hsize\marginparwidth 
  \@parboxrestore\tiny\raggedright #2}}
\renewcommand{\setminus}{{\smallsetminus}}
\title{On the existence of universal links in three-manifolds}
\author[Francisco González]{Francisco González-Acuña} 
\address[]{Instituto de Matemáticas de la UNAM, Unidad Cuernavaca,  62210, Mexico} 
\email[]{ficomx@yahoo.com.mx}
\author[Araceli Guzmán]{Araceli Guzmán-Tristán} 
\address[]{Centro de Investigaci\'on en Matem\'aticas, GTO 36023, Mexico} 
\email[]{araceli.guzman@cimat.mx} 
\author[Jos\'{e} Rodr\'{\i}guez]{Jos\'{e} Andr\'{e}s Rodr\'{\i}guez Migueles}
\address[]{Centro de Investigaci\'on en Matem\'aticas, GTO 36023, Mexico}
\email[] {jose.migueles@cimat.mx} 
\author[Jesús Rodríguez]{Jesús Rodríguez-Viorato}
\address[]{Centro de Investigaci\'on en Matem\'aticas, GTO 36023, Mexico}
\email{jesusr@cimat.mx}
\date{\today}
\begin{document}

\begin{abstract}
We study the existence of branched coverings between closed $3$-manifolds, with emphasis on universal knots and links. We prove that the only closed $3$-manifolds that admit a universal link are spherical. Furthermore, we distinguish between universal links and \textit{complement universal} links and show that these notions do not coincide in general, by exhibiting infinitely many examples of complement universal links that are not universal.  Also, we  prove that there is no closed aspherical $3$-manifold, such that every closed,  aspherical  $3$-manifold is a branched covering over it. Finally, we characterize the closed $3$-manifolds admitting branching coverings from $P^3 \# P^3$, and deduce that there is no closed reducible $3$-manifold, such that every closed  reducible  $3$-manifold is a branched covering over it.

\end{abstract}

\maketitle

\section{Introduction}

Hilden proved in \cite{Hilde:BranchedCoverToS3} that every orientable $3$-manifold is a $3$-fold branched covering of the $3$-sphere $\mathbb{S}^3$. With this result as a starting point, Thurston proved in \cite{Thurston:UniversalLinks} the striking fact that there exists a link $L \subset \mathbb{S}^3$ such that every orientable $3$-manifold is a branched covering of $\mathbb{S}^3$ with branching set exactly $L$. Thurston called $L$ a universal link. The existence of universal links reflects geometric and group-theoretic constrains on the ambient manifold. In particular, it is natural to ask for a classification of closed $3$-manifolds that admit such links. In this paper, we answer this question:

\begin{named}{Theorem \ref{Thm:SphericalAreUniversal}}
A closed $3$-manifold admits universal knots and links, if and only if, it is spherical.
\end{named}

The necessity condition for the orientable case in Theorem \ref{Thm:SphericalAreUniversal}, comes from the fact that given any branched cover, between orientable closed $3$-manifolds $f:N\rightarrow M$, then $f_*:\pi_1(N)\rightarrow \pi_1(M)$ has an image of finite index in $\pi_1(M),$ which was first notice by Hopf in \cite[Satz VIII]{Hopf:ImageOfFiniteIndex} (see Proposition \ref{prop:hopf}). This means that if we have a branched cover from the $3$-sphere to $M,$ then the fundamental group of $M$ is finite, and by Geometrization (see \cite{Perelman:Geometrization}) $M$ is spherical. This can also be obtained using a related result in \cite[Lemma 2.4]{HughesKjuchukovaMiller:OpenBranchedCovers}. Then the goal of the proof of Theorem \ref{Thm:SphericalAreUniversal} is to show the existence of universal links in spherical manifolds and the necessity condition for the non-orientable case.

Furthermore, we notice that Thurston provided a more general definition of a universal link in \cite{Thurston:UniversalLinks}, by calling a link $L$ in an arbitrary orientable closed $3$-manifold $M$ \emph{complement universal} if there exists a link in any orientable closed $3$-manifold whose complement is virtually homeomorphic to $M\setminus L$ (see Section \ref{Sec:UniversalLink}). While every universal link is complement universal, the converse is not generally true, as we show: 

\begin{named}{Corollary \ref{Cor:UniversalLinkNotImplyUniversallyBranching}}
There exists a complement universal link in infinitely many $3$-manifolds which is not a universal link.
\end{named}

There is another homotopy obstruction, for the existence of branched covers between closed oriented manifolds, known by Hopf in \cite[Satz I]{Hopf:ImageOfFiniteIndex}, which is that the induced morphism in rational cohomology algebras is injective. In this setting, Berestein and Edmons proved in \cite{BeresteinEdmonds:BranchedCovers}, among other things, that an orientable $3$-manifold $M$ is a $3$-fold branched covering of $\mathbb{S}^2\times\mathbb{S}^1$ if and only if $H^1(M ) \neq 0$. Some time later, Núñez in \cite[Theorem 2.1]{Nuñez:UniversalLinksInS2XS1} gave an example of a link $L$ in $\mathbb{S}^2\times\mathbb{S}^1$  such that every closed, orientable $3$-manifold $N$ with $H^1(N ) \neq 0$ is a branched covering over  $\mathbb{S}^2\times\mathbb{S}^1$  with branching set exactly the link $L,$ and denoted these links as \textit{universal link for} $\mathbb{S}^2\times\mathbb{S}^1.$ 

In this paper we investigate a similar problem of finding a closed, aspherical  $3$-manifold $M$,  and  a link $L$ inside such that every closed, aspherical $3$-manifold $N$ is a branched covering over $M$  with branching set exactly the link $L.$ But we prove that such branched coverings can not even exist:

\begin{named}{Proposition \ref{Prop:NonExistanceAsphericalUniversalManifolds}}
There is no closed,  aspherical  $3$-manifold $M$ such that every closed,  aspherical  $3$-manifold $N$ is a branched covering over $M.$ 
\end{named}

Motivated by the previous result, we investigate also the problem of finding a closed, reducible  $3$-manifold $M$  such that every closed, reducible $3$-manifold $N$ is a branched covering over $M,$ and also proved that such branched coverings can not even exist:
\begin{named}{Corollary \ref{Coro:NonExistanceReducibleUniversalManifolds}}

There is no closed reducible $3$-manifold $M$ such that every closed, reducible  $3$-manifold $N$ is a branched covering over $M.$  
\end{named}

More information about the existence of branched coverings between manifolds can be found in \cite{HughesKjuchukovaMiller:OpenBranchedCovers}.

The paper is organized as follows. In section \ref{sec1}, we give some basic definitions about branched coverings, universality and transfer maps, followed by the proof of Theorem \ref{Thm:SphericalAreUniversal}. In section \ref{Sec:UniversalLink}, we prove the non-equivalence between the definitions of complement universal links and universal links. In section \ref{Sec:NonExistanceBranchedCovers}, we address the problem of finding $3$-manifolds that could have universal links within a restricted class of closed  $3$-manifolds, in Proposition \ref{Prop:NonExistanceAsphericalUniversalManifolds}, for aspherical manifolds and in Corollary \ref{Coro:NonExistanceReducibleUniversalManifolds}, for reducible manifolds. Finally, in section \ref{questions} we present some open problems and questions concerning complement universality and characterization on the existence of branched covers between some closed $3$-manifolds.

\subsection{Acknowledgements} We gratefully thank Dieter Kotschick for his useful advice and comments on an earlier preprint, mainly proving Proposition \ref{Prop:NonExistanceAsphericalUniversalManifolds} and recalling Hopf's paper \cite{Hopf:ImageOfFiniteIndex}. Also, we thank Mario Eudave and Marc Kegel for pointing us toward helpful references. Finally, the authors thank CIMAT for creating an attractive mathematical environment.

\section{Universal links in spherical $3$-manifolds}\label{sec1} 

We begin by defining branched coverings between 3-manifolds and universality.

\begin{definition}
An open proper map $\phi: M^3 \to N^3$ is called a \emph{branched covering} if there exists a link $L\subset N^3$ such that the restriction $\phi|: M^3 -\varphi^{-1}(L) \to N^3- L$ is a finite covering space. The link $L$ is called the \emph{branching set}.
\end{definition}

The following observation, originally due to Hopf, will be one of the main tools used in this work.

\begin{proposition}[Hopf {\cite[Satz VIII]{Hopf:ImageOfFiniteIndex}} ]\label{prop:hopf}
Let $f \colon M \to N$ be a branched covering between orientable, closed, connected $n$-manifolds. Then the image of the induced homomorphism $f_\sharp \colon \pi_1(M) \to \pi_1(N)$ is a subgroup of finite index; that is, $[\pi_1(N) : f_\sharp(\pi_1(M))] < \infty.$
\end{proposition}

\begin{proof}
Recall that a branched covering between closed oriented manifolds has non-zero degree; equivalently, the induced map $f_* \colon H_n(M) \to H_n(N)$ 
is non-zero.

Let $H = f_\#(\pi_1(M)) \le \pi_1(N)$, and let $\rho \colon \widetilde{N} \to N$ be the covering map corresponding to the subgroup $H$. By the lifting property of covering spaces, the map $f$ lifts to a continuous map $\widetilde{f} \colon M \to \widetilde{N},$ so that $f = \rho \circ \widetilde{f}$. Consequently, on top-dimensional homology we have $f_* = \rho_* \circ \widetilde{f}_* ,$ which shows that $f_*$ factors through $H_n(\widetilde{N})$.

If $\widetilde{N}$ were non-compact, then $H_n(\widetilde{N}) = 0$, implying that $f_* = 0$. This contradicts the fact that $f$ has non-zero degree. Therefore, $\widetilde{N}$ must be compact.

Since $\widetilde{N}$ is compact, the covering map $\rho \colon \widetilde{N} \to N$ is finite, which is equivalent to $H$ being a finite-index subgroup of $\pi_1(N)$. This completes the proof.
\end{proof}

\begin{definition}\label{Def:UniversallyBranchingLink}
We define a link $L$ in a closed $3$-manifold $M$ to be \emph{universal} if every orientable closed  $3$-manifold $N$ is a branched cover of $M$ over $L.$
\end{definition}

\begin{definition}
A \emph{spherical $3$-manifold} is the resultant quotient space $\mathbb{S}^3/\Gamma$, where $\Gamma$ is a finite subgroup of $SO(4)$ acting freely by rotations on $\mathbb{S}^3$. Special cases of these manifolds include lens spaces and prism manifolds.    
\end{definition} 

\textbf{Transfer maps.} Now we explore an homological obstruction for the existence of branched covering maps between the $3$-sphere and a given $3$-manifold, which we will use to argue the non-orientable case in Theorem \ref{Thm:SphericalAreUniversal}. 

The subject of transfer maps on covering maps is well-known. But its existence can be extended to non-zero degree maps or branched covers. In \cite{Smith:TransferRamifiedCoverings}, L. Smith defined a $d$-fold ramified covering map,  where branched coverings are a particular case, and proved a generalization of transfer maps for branched covers that includes non-orientable manifolds, as follows:

\begin{proposition}[Proposition 2.2 in \cite{Smith:TransferRamifiedCoverings}]\label{Prop:GeneralTransfer}
    Let $f : M^n \to N^n$ be a $d$-fold ramified covering  between $n$-manifolds, then there exist a \emph{transfer homomorphism of} 
\[ f_\# : H_k(M) \to H_k(N) \]
denoted as
\[ \tau_f: H_k(N) \to H_k(M), \]
such that 
\[ f_\# \circ\tau_f: H_k(N) \to H_k(N), \]
is multiplication by $d$.
\end{proposition}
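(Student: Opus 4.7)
The plan is to imitate the classical transfer for unramified coverings, modified to account for local ramification indices along the branch set.

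Recall the classical construction: for a genuine $d$-fold covering $\pi \colon \tilde X \to X$, one sends a singular simplex $\sigma \colon \Delta^k \to X$ to the sum of its $d$ lifts to $\tilde X$ (these exist and are unique after a barycentric subdivision small enough that $\sigma$ factors through an evenly-covered chart, since $\Delta^k$ is simply connected). The result is a chain map, and $\pi_\#$ applied to it recovers $d\,\sigma$. No orientations are used, so the construction works verbatim for non-orientable manifolds and any coefficient ring.

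For the ramified case, choose compatible triangulations of $M$ and $N$ making $f$ simplicial, with both the branch locus $L \subset N$ and its preimage $f^{-1}(L) \subset M$ realized as subcomplexes. Define on simplicial chains
\[
\tau_f(\sigma) \;=\; \sum_{\tilde\sigma \in f^{-1}(\sigma)} e(\tilde\sigma)\, \tilde\sigma,
\]
where the sum runs over $k$-simplices $\tilde\sigma$ of $M$ that map homeomorphically onto $\sigma$, and $e(\tilde\sigma)$ is the local ramification index of $f$ along $\tilde\sigma$ (so $e\equiv 1$ outside the branch set). The basic local property of a $d$-fold ramified cover of $n$-manifolds is that $\sum_{x \in f^{-1}(y)} e(x) = d$ for every $y \in N$; from this it is immediate that $f_\# \circ \tau_f = d\cdot \operatorname{id}$ at the chain level, hence on homology.

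The main technical obstacle is verifying that $\tau_f$ is a chain map, i.e.\ $\partial \circ \tau_f = \tau_f \circ \partial$, across the branch locus. Away from $L$ this reduces to the standard unramified computation. At a simplex meeting $L$, one appeals to the local model: near a branch point, $f$ is modelled on a cyclic quotient fixing a codimension-two subspace, and a direct local analysis shows that the ramification indices of the incident top-dimensional simplices assemble so that the boundary contributions match those of $\tau_f(\partial \sigma)$. Once this chain-level identity is in place, the induced map on $H_k$ is the desired transfer $\tau_f$, and the multiplication-by-$d$ relation passes at once to homology.
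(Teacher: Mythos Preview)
The paper does not supply a proof of this proposition: it is quoted directly as Proposition~2.2 of Smith \cite{Smith:TransferRamifiedCoverings} and invoked as a black box in the proof of Theorem~\ref{Thm:SphericalAreUniversal}. There is therefore no argument in the paper to compare yours against.

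Your construction is the standard one and is correct in outline. The one step that deserves more than the phrase ``a direct local analysis'' is the chain-map verification across the branch locus. What you need to make explicit is this: for a face $\tau\subset L$ of a simplex $\sigma$, each lift $\tilde\tau$ with ramification index $e(\tilde\tau)$ is shared by exactly $e(\tilde\tau)$ of the lifts $\tilde\sigma$ of $\sigma$ (this is precisely what the local model $z\mapsto z^{e}$ in the normal disk gives), so the contribution of $\tilde\tau$ to $\partial\tau_f(\sigma)$ is $e(\tilde\tau)\,\tilde\tau$, matching its contribution to $\tau_f(\partial\sigma)$. You should also say what $e(\tilde\sigma)$ means for a $k$-simplex with $k<n$: take it to be the ramification index at a generic interior point of $\tilde\sigma$, which is constant once the triangulation is fine enough that each simplex lies either inside or outside the branch set. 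With those two points spelled out, your argument is complete for branched covers of manifolds with codimension-two branch locus, which is all the present paper requires; Smith's notion of $d$-fold ramified covering is somewhat broader, so your proof covers the application here but not necessarily his full statement.
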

\begin{corollary}\label{Coro:NonExistenceBranchedCoverBetti}
    Rational homology spheres do not branch cover a closed  $3$-manifold $M$ with $H_1(M)\neq 0.$\qed
\end{corollary}

\begin{corollary}\label{Coro:NonExistenceBranchedCoverNonOrientable}
      Rational homology spheres do not branch cover a non-orientable, closed  $3$-manifolds.
\end{corollary}

\begin{proof}
    Non-orientable closed $3$-manifolds $M$ have non-zero first Betti number \cite[Lemma 8.6]{BeresteinEdmonds:BranchedCovers}. Then by Corollary \ref{Coro:NonExistenceBranchedCoverBetti} a rational homology spheres can not branch cover a non-orientable, closed  $3$-manifold.
\end{proof}
\begin{theorem}\label{Thm:SphericalAreUniversal}
A closed $3$-manifold admits a universal knot or link if and only if it is spherical.
\end{theorem}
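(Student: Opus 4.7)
The plan is to prove the two directions separately, combining Thurston's construction with the $\pi_1$- and transfer-obstructions already assembled in Section~\ref{sec1}.

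For sufficiency (spherical $\Rightarrow$ admits a universal link), I would push down Thurston's universal link $L_0 \subset \mathbb{S}^3$ along the covering map $p \colon \mathbb{S}^3 \to M = \mathbb{S}^3/\Gamma$ coming from the free action of the finite group $\Gamma \subset SO(4)$. The $\Gamma$-translates of $L_0$ form a finite family of embedded $1$-submanifolds of the $3$-manifold $\mathbb{S}^3$, so a small generic isotopy makes $L_0$ disjoint from each of its non-trivial translates; universality is preserved since it depends only on the isotopy class of the branching set. The image $L := p(L_0) \subset M$ is then an honest link, and for any orientable closed $3$-manifold $N$ the branched cover $g \colon N \to \mathbb{S}^3$ with branching set $L_0$ supplied by Thurston composes with $p$ to a branched cover $p \circ g \colon N \to M$; its restriction to $N \setminus g^{-1}(\Gamma \cdot L_0)$ is a composition of two honest coverings onto $M \setminus L$, so the branching set of $p \circ g$ is contained in $L$.

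For necessity, assume $M$ carries a universal link $L$. Universality applied to $N = \mathbb{S}^3$ produces a branched cover $f \colon \mathbb{S}^3 \to M$ of some degree $d \geq 1$. If $M$ is orientable, the Hopf result recalled in the introduction forces $f_*(\pi_1(\mathbb{S}^3))$ to have finite index in $\pi_1(M)$; since $\pi_1(\mathbb{S}^3)$ is trivial, $\pi_1(M)$ is finite, and Perelman's Geometrization upgrades this to $M$ being spherical. If instead $M$ is non-orientable, I would apply Proposition~\ref{Prop:GeneralTransfer} with rational coefficients in degree one: the composition
\[
H_1(M;\mathbb{Q}) \xrightarrow{\tau_f} H_1(\mathbb{S}^3;\mathbb{Q}) \xrightarrow{f_\#} H_1(M;\mathbb{Q})
\]
is multiplication by $d \neq 0$, while the middle group vanishes, so $b_1(M) = 0$. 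However, every closed $3$-manifold has $\chi = 0$ (pass to the orientation double cover and apply Poincar\'e duality in odd dimensions), and non-orientability gives $H_3(M;\mathbb{Q}) = 0$, so $0 = 1 - b_1(M) + b_2(M)$ forces $b_1(M) \geq 1$, contradicting the transfer computation.

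The step that most requires care is the $\Gamma$-equivariant disjointness trick in the sufficiency direction: one must verify that a small isotopy of $L_0$ makes the projection to $M$ an honest embedded link and that the perturbation preserves Thurston's universality. Everything else is a direct quotation of Thurston's theorem, Hopf's $\pi_1$-image theorem, Perelman's Geometrization, and the transfer Proposition~\ref{Prop:GeneralTransfer}.
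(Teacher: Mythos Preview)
Your proof is correct and follows essentially the same approach as the paper. The only minor differences in execution are that the paper isotopes the universal link into a small ball on which the covering $\pi$ is already injective (rather than arranging disjointness of the $\Gamma$-translates by genericity), and that the paper cites the inequality $b_1(M)\geq 1$ for closed non-orientable $3$-manifolds from \cite{BeresteinEdmonds:BranchedCovers} instead of deriving it from $\chi(M)=0$ as you do.
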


\begin{proof}
   By Theorem \cite[Lemma 2.4]{HughesKjuchukovaMiller:OpenBranchedCovers} and Proposition \ref{prop:hopf} we have the neccessity condition  when $M$ is an orientable closed $3$-manifold. For the non-orientable case, we have by Corollay \ref{Coro:NonExistenceBranchedCoverNonOrientable} that there is not branching cover from the $3$-sphere to $M.$ 
   
   Now we prove the existence of universal links. Notice that the same proof can be applied to knots.

Let $M= \mathbb{S}^3/\Gamma$ be a spherical manifold, where $\Gamma$ is a finite subgroup of $SO(4)$ acting freely on $\mathbb{S}^3$. Thus, the projection map $\pi\colon \mathbb{S}^3\to \mathbb{S}^3/\Gamma$ is a covering map.

Let $k\subset \mathbb{S}^3$ be a universal link. We can find an isotopy of $\mathbb{S}^3$ such that the image of $k$ under this isotopy, say $K$, is contained in an open ball $B$, and $\pi|_B\colon B\to \pi(B)$ is a homeomorphism. In this way, $\pi(K)=K'$ is a link in $M=\mathbb{S}^3/\Gamma$. We state that $K'$ is a universal link in $M$:

Let $M'$ be a closed, orientable $3$-manifold. Since the link $k\subset \mathbb{S}^3$ is universal and $K$ is equivalent to $k$, then $K\subset \mathbb{S}^3$ is universal, so $M'$ can be obtained as a covering of $\mathbb{S}^3$ branched along $K$, that is, there is an open, proper map $\varphi\colon M'\to \mathbb{S}^3$ such that the restriction $\varphi|\colon M'\backslash \varphi^{-1}(K)\to \mathbb{S}^3\backslash K$ is a finite covering space. Thus, the composition $\pi\circ\varphi\colon M'\to M$ is open and proper. Moreover, since $\pi|_B$ is a homeomorphism, $(\pi\circ\varphi)^{-1}(K')=\varphi^{-1}(\pi^{-1}(K'))=\varphi^{-1}(K)$ and the restriction $\pi\circ\varphi|\colon M'\backslash(\pi\circ\varphi)^{-1}(K')\to M\backslash K'$ is a finite covering space.

\end{proof}



\section{Complement universal links that are not universal}\label{Sec:UniversalLink}

In \cite{Thurston:UniversalLinks} Thurston gave a more general definition of a universal link, as follows:
\begin{definition}\label{Def:UniversalLink}
We define a link $L$ in a orientable closed  $3$-manifold $M$ to be  \emph{complement universal} if  every orientable closed  $3$-manifold $N$ contains a link $L'\subset N$ such that $N\setminus L'$ is homeomorphic to a finite sheeted covering space of $M\setminus L.$
\end{definition}
\begin{remark}
    To avoid confusion on the concepts. Our concept of universal, which is widely used in literature on branching covers, is called in  \cite{Thurston:UniversalLinks} under the name of \emph{universal branching} and our concept of complement universal is called in the same paper under the name of \emph{universal.}
\end{remark}

Notice that if $L$ is a universal link, it is a complement universal link, because the restriction of a branched covering map on the complement of the branching set is a finite covering map (see Def. \ref{Def:UniversallyBranchingLink}).

\begin{corollary}\label{Cor:UniversalLinkNotImplyUniversallyBranching}
There exists a complement universal link in infinitely many $3$-manifolds that is not universal.
\end{corollary}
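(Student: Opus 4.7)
The plan is to produce, for each of infinitely many non-spherical closed $3$-manifolds $M$, a link $L\subset M$ whose exterior agrees with the exterior of a fixed universal link in $\mathbb{S}^3$. Complement universality is then inherited from the $\mathbb{S}^3$ link, while Theorem \ref{Thm:SphericalAreUniversal} rules out universality. Concretely, I would first fix, by \cite{Thurston:UniversalLinks}, a hyperbolic universal link $L_0 \subset \mathbb{S}^3$; then, for each slope vector $\vec{s}$ on the components of $L_0$, I would let $M_{\vec{s}}$ denote the closed orientable $3$-manifold obtained from $\mathbb{S}^3$ by Dehn surgery on $L_0$ with slopes $\vec{s}$, and let $L_{\vec{s}} \subset M_{\vec{s}}$ be the link whose components are the cores of the filling solid tori.

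The central observation is that $M_{\vec{s}} \setminus L_{\vec{s}}$ is homeomorphic to $\mathbb{S}^3 \setminus L_0$: both open $3$-manifolds are obtained from the compact exterior $\mathbb{S}^3 \setminus \nu(L_0)$ by attaching an open cusp $T^2 \times (0,1]$ to each boundary torus, and since every self-homeomorphism of $T^2$ extends across $T^2\times(0,1]$, the particular attaching map is immaterial up to homeomorphism. Because complement universality depends only on the homeomorphism type of the link complement, and $L_0$ is complement universal in $\mathbb{S}^3$ (as every universal link is), the link $L_{\vec{s}}$ is complement universal in $M_{\vec{s}}$.

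It then remains to arrange that infinitely many $M_{\vec{s}}$ are non-spherical and pairwise non-homeomorphic. For this I would invoke Thurston's hyperbolic Dehn surgery theorem: for all but finitely many $\vec{s}$, the manifold $M_{\vec{s}}$ is hyperbolic with volume strictly less than $\vol(\mathbb{S}^3 \setminus L_0)$, and these volumes converge to that bound as the slopes tend to $\infty$, so infinitely many of the $M_{\vec{s}}$ have pairwise distinct volumes and are therefore pairwise non-homeomorphic and non-spherical. Theorem \ref{Thm:SphericalAreUniversal} then forbids any universal link in such an $M_{\vec{s}}$, so $L_{\vec{s}}$ is complement universal but not universal, producing the desired infinite family. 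The only delicate point is the cusp-gluing homeomorphism of the central observation; it reduces to extending self-homeomorphisms of $T^2$ across a collar, which is standard.
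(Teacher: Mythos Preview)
Your proposal is correct and follows essentially the same strategy as the paper: start with a hyperbolic universal link in $\mathbb{S}^3$ (the paper allows any spherical base $S$ and cites the Borromean rings as the concrete instance), perform Dehn surgeries to obtain infinitely many non-spherical closed $3$-manifolds, take the cores of the filling tori as the new link, observe that its complement is homeomorphic to the original link complement so complement universality is inherited, and invoke Theorem~\ref{Thm:SphericalAreUniversal} to rule out universality. Your use of hyperbolic volumes to certify that infinitely many of the surgered manifolds are pairwise non-homeomorphic is a slightly more explicit justification than the paper's appeal to hyperbolic Dehn filling, but the argument is otherwise the same.
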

\begin{proof}
    Consider $M$, a non-spherical $3$-manifold that is homeomorphic to a Dehn surgery over a hyperbolic universal link $L$ in a spherical $3$-manifold $S,$ e.g. three ortogonal geodesics in the $3$-torus, whose complement is homeomorphic to the Borromean ring complement in the $3$-sphere.
    By Thurston's hyperbolic Dehn filling theorem, this provides infinitely many $3$-manifolds $M.$ Moreover, for every $N$ orientable closed  $3$-manifold there is a branched cover $p:(N,p^{-1}(L))\rightarrow(S,L),$ which induces a finite cover from $N\setminus p^{-1}(L)$ to $S\setminus L,$ and the latter is homeomorphic  to the complement of the core $L'$ of the Dehn filled tori in $M.$  
    
   In conclusion, $L'\subset M$ is a complement universal link, that is not a universal link because $M$ is non-spherical by Theorem \ref{Thm:SphericalAreUniversal}. And by varying over the integral Dehn surgery on $L$ in  $S$, we obtain infinitely many non-homeomorphic $3$-manifolds $M.$
\end{proof}

In the following section, we restrict the notion of universality to a special family of closed $3$-manifolds.

\section{Non-existence of Branched covers between some $3$-manifolds}\label{Sec:NonExistanceBranchedCovers} 

Núñez gave in \cite{Nuñez:UniversalLinksInS2XS1} different notions of universality for links, one of which is the following:

\begin{definition}\label{Def:UniversalLinkS1xS2}
    A \textit{universal link for $\mathbb{S}^2\times\mathbb{S}^1$is} is a link $L$ in $\mathbb{S}^2\times\mathbb{S}^1,$ such that every closed, orientable $3$-manifold $N$ with $H^1(N ) \neq 0$ is a branched covering over  $\mathbb{S}^2\times\mathbb{S}^1$  with branching set exactly the link $L.$
\end{definition}

And gave in \cite[Theorem 2.1]{Nuñez:UniversalLinksInS2XS1} an example of a universal link for $\mathbb{S}^2\times\mathbb{S}^1.$ In the first part of this section, we prove that the analog notion of universality can not hold for the family of closed aspherical $3$-manifolds:

\begin{proposition}\label{Prop:NonExistanceAsphericalUniversalManifolds}
There is no closed aspherical $3$-manifold $M$ such that every closed, aspherical  $3$-manifold $N$ is a branched covering over $M.$    
\end{proposition}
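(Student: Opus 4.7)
The plan is to argue by contradiction and exhibit a single closed reducible $3$-manifold $N$ that cannot branch-cover any closed aspherical $3$-manifold $M$. Two well-known facts drive the argument: (i) the fundamental group of a closed aspherical $3$-manifold is infinite and torsion-free---the infinitude coming from the non-compactness of a contractible universal cover, and the torsion-freeness because a non-trivial finite subgroup of $\pi_1(M)$ would have infinite cohomological dimension, contradicting $\operatorname{cd}(\pi_1(M))\le 3$; and (ii) for any branched cover $f\from N\to M$ the image $f_*(\pi_1(N))$ has finite index in $\pi_1(M)$, which is the classical theorem of Hopf \cite{Hopf:ImageOfFiniteIndex} already invoked in the proof of Theorem \ref{Thm:SphericalAreUniversal}.

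Concretely, I would take $N=\RR P^3\#\RR P^3$, which is reducible since the separating sphere does not bound a ball. By van Kampen, $\pi_1(N)\cong \ZZ/2\ast\ZZ/2$, generated by two elements of order $2$. If $f\from N\to M$ were a branched cover to a closed aspherical $M$, each generator would map under $f_*$ to a torsion element of $\pi_1(M)$; by fact (i) every such element is trivial, so $f_*(\pi_1(N))=\{1\}$. By fact (ii) this would force $\pi_1(M)$ to be finite, again contradicting (i). Since the same $N$ defeats every candidate $M$, the proposition follows.

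The main technical step is to secure (ii) for branched covers rather than honest finite covers. I would handle it directly: restricting $f$ to the complement of the preimage of the branching link $L\subset M$ produces a genuine degree-$d$ covering space $N\setminus f^{-1}(L)\to M\setminus L$, and since $L$ has codimension two the inclusions induce surjections $\pi_1(M\setminus L)\twoheadrightarrow\pi_1(M)$ and $\pi_1(N\setminus f^{-1}(L))\twoheadrightarrow\pi_1(N)$ obtained by killing meridians. A short chase of the resulting commutative square of $\pi_1$'s gives $[\pi_1(M):f_*(\pi_1(N))]\le d$, which is all that is needed. With (ii) in hand, the contradiction produced from $N=\RR P^3\#\RR P^3$ above closes the proof.
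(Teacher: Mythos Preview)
Your argument is correct and complete for the statement as literally written, with $N$ ranging over \emph{reducible} $3$-manifolds: the single choice $N=\RR P^3\#\RR P^3$ defeats every closed aspherical $M$ via torsion-freeness of $\pi_1(M)$ together with Hopf's finite-index property, exactly as you outline. Your direct verification of (ii) by restricting to the link complement and chasing the commutative square of fundamental groups is also fine.

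However, comparison with the surrounding text (the introduction, the lead-in paragraph of Section~\ref{Sec:NonExistanceBranchedCovers}, and above all the paper's own proof) shows that the word ``reducible'' in the statement is a typo for ``aspherical'': the proof invokes $\mathbb{T}^3$ and Nil-manifolds as test manifolds $N$, and these are irreducible. For the intended statement your example no longer applies, since $\RR P^3\#\RR P^3$ is not aspherical, and no analogous torsion trick is available because the fundamental group of any closed aspherical $3$-manifold is torsion-free. The paper argues instead in two steps: first, the hypothetical branched cover $\mathbb{T}^3\to M$ forces, via Hopf's theorem and asphericity, that $M$ is finitely covered by $\mathbb{T}^3$; second, results of Kotschick--Neofytidis on domination by circle bundles then prohibit any closed Nil-manifold from branch-covering such an $M$, yielding the contradiction. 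So your elementary route settles the literal (easier) statement, while the paper's argument, which leans on a non-trivial domination theorem, is what is required for the intended aspherical version.
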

\begin{proof}
First notice that if $M$ is non-orientable, then we have a hyperbolic rational homology sphere, e.g. the Fomenko–Matveev–Weeks manifold, which can not branch cover $M,$ because of Corollary \ref{Coro:NonExistenceBranchedCoverNonOrientable}.
If there is an orientable, closed, aspherical $3$-manifold $M$ such that every closed, aspherical $3$-manifold $N$ is a branched covering over $M.$ Then there is a non-zero degree map form $f:\mathbb{T}^3\rightarrow M.$ By Proposition \ref{prop:hopf}, we have that:
\begin{enumerate}
    \item $M$ is spherical, if  $f_\#(\pi_1(\mathbb{T}^3))=\{e\},$ or
    \item $M$ is virtually abelian, meaning $M$ has a finite covering with fundamental group $\mathbb{Z}^k,$ if   $f_\#(\pi_1(\mathbb{T}^3))\neq\{e\}.$
\end{enumerate}

The first case is impossible, because $M$ is aspherical. And in the second case, again by asphericity we have that $M$ is finitely covered by the 3-torus. However, there are closed aspherical $3$-manifolds, e.g. the ones with $Nil$-geometry, that are not branched covers over $M$ because of  \cite[Theorem 2 and Theorem 6]{KotschickNeofytidis:DominationByCircleBunles}.
\end{proof}
In the second part of this section, we prove in Theorem \ref{Thm:NonExistanceRedUniversalManifolds} that the only closed, reducible $3$-manifolds, to be branch covered by $P^3\# P^3,$ is itself. And conclude in Corollary \ref{Coro:NonExistanceReducibleUniversalManifolds} that the analog notion of universality in Definition \ref{Def:UniversalLinkS1xS2} can not hold for the family of closed, reducible $3$-manifolds. 



    $$\pi_1(M_1)\ast \pi_1(M_2)\rightarrow (\pi_1(M_1)/K_1)\times (\pi_1(M_2)/K_2)$$

\begin{theorem}\label{Thm:NonExistanceRedUniversalManifolds}
Let $M$ be a closed $3$-manifold. There exists a branched covering
\[
f \colon P^3 \# P^3 \longrightarrow M
\]
if and only if
\begin{enumerate}
    \item $M$ is a spherical manifold, or
    \item $M$ is homeomorphic to $P^3 \# P^3$.
\end{enumerate}
\end{theorem}

\begin{proof}
The converse is clearly true, because as we have shown in Theorem \ref{Thm:SphericalAreUniversal} all spherical $3$-manifolds are branch covered by any orientable, closed $3$-manifold. And when $M = P^3 \# P^3$ we have the identity map $P^3 \# P^3 \to P^3 \# P^3$ as a branched covering. 

Suppose there exists a branched covering
$f \colon P^3 \# P^3 \to M$. By Corollary \ref{Coro:NonExistenceBranchedCoverNonOrientable}, the manifold $M$ must be orientable. From now on, we assume that $M$ is orientable; in particular, $f$ is a map of non-zero degree.

Let $M = M_1 \# \cdots \# M_k$ be the (unique) prime decomposition of $M$, given by the Kneser--Milnor prime decomposition theorem. Each prime summand $M_i$ is either aspherical, $\mathbb{S}^1 \times \mathbb{S}^2$, or spherical.

Observe that no summand $M_i$ can be $\mathbb{S}^1 \times \mathbb{S}^2$. Indeed, if this were the case, collapsing all other summands and composing with $f$ would yield a non-zero degree map $P^3 \# P^3 \to \mathbb{S}^1 \times \mathbb{S}^2,$
which is impossible by Corollary \ref{Coro:NonExistenceBranchedCoverBetti}. Hence, each summand $M_i$ is either aspherical or spherical.

For each $i$, consider the map $f_i \colon P^3 \# P^3 \to M_i$ obtained by composing $f$ with the non-zero degree map $M \to M_i$ that collapses all connected summands other than $M_i$.

Suppose that $M_i$ is aspherical. Then $\pi_1(M_i)$ has no elements of order two (it does not have torsion elements in general). Since $\pi_1(P^3 \# P^3) \cong \mathbb{Z}_2 * \mathbb{Z}_2$,
it follows that the induced homomorphism $(f_i)_\# \colon \pi_1(P^3 \# P^3) \to \pi_1(M_i)$ is trivial, that is, $(f_i)_\#(\pi_1(P^3 \# P^3)) = \{e\}.$

By Proposition \ref{prop:hopf}, this implies that $M_i$ is spherical, contradicting the assumption that $M_i$ was aspherical. Therefore, all summands $M_i$ must be spherical.

Again by Proposition \ref{prop:hopf}, the image
$f_\#(\pi_1(P^3 \# P^3))$
is a subgroup of finite index in $\pi_1(M) = *_i \pi_1(M_i).$
By the Kurosh subgroup theorem \cite{kurosch}, the images of the generators of
\[
\pi_1(P^3 \# P^3) \cong \langle x, y \mid x^2 = y^2 = e \rangle
\]
must be conjugate into the free factors $\pi_1(M_i)$. That is, there exist elements $w_1, w_2 \in \pi_1(M)$ and indices $i,j$ such that
\[
w_1 f_\#(x) w_1^{-1} \in \pi_1(M_i)
\quad \text{and} \quad
w_2 f_\#(y) w_2^{-1} \in \pi_1(M_j).
\]

There are three possibilities: both, one, or neither of the elements $f_\#(x)$ and $f_\#(y)$ is trivial. If at least one of them is trivial, then the image $f_\#(\pi_1(P^3 \# P^3))$ is finite. By Proposition \ref{prop:hopf}, this implies that $M$ is spherical.

Thus, we are left with the case in which neither $f_\#(x)$ nor $f_\#(y)$ is trivial. In this situation, again by the Kurosh subgroup theorem, the image is isomorphic to $H = \mathbb{Z}_2 * \mathbb{Z}_2.$

By Proposition \ref{prop:hopf}, $H$ must be a finite-index subgroup of $G = *_i A_i$, where  $A_i = \pi_1(M_i)$ are finite groups.

Using the Euler characteristic of groups (see \cite{Toshiyuki:EulerCaracteristicOfGroups}), we have
\[
\chi(H) = [G : H] \, \chi(G).
\]
Since  $\chi(H) = \chi(\mathbb{Z}_2) + \chi(\mathbb{Z}_2) - 1 = 0,$ it follows that $\chi(G) = 0$. On the other hand,
\[
\chi(G) = \left( \sum_{i=1}^k \frac{1}{|A_i|} \right) - k + 1.
\]
This equality can hold only when $k = 2$ and $|A_1| = |A_2| = 2$.

Therefore, in this final case, the manifold $M$ has exactly two spherical summands with fundamental group $\mathbb{Z}_2$, and hence $M \cong P^3 \# P^3$. This completes the proof.

\end{proof}

\begin{corollary}\label{Coro:NonExistanceReducibleUniversalManifolds}
There is no closed reducible $3$-manifold $M$ such that every closed, reducible  $3$-manifold $N$ is a branched covering over $M.$  
\end{corollary}
\begin{proof}
    If there is an orientable, closed, reducible $3$-manifold $M$ such that every closed, reducible $3$-manifold $N$ is a branched covering over $M.$ Then by Theorem \ref{Thm:NonExistanceRedUniversalManifolds}, $M=P^3\# P^3.$
    
    Furthermore, if we choose $p\in\mathbb{Z}_{>2},$ then we would have a non-zero degree map from the connected sum of two lens spaces of order $p$ to $P^3\# P^3.$ But we have a contradiction, because due to Proposition \ref{prop:hopf}, $P^3\# P^3$ would be spherical.
\end{proof}

\section{Further comments}\label{questions}

As we point out in Section \ref{Sec:NonExistanceBranchedCovers}, Núñez gave in \cite{Nuñez:UniversalLinksInS2XS1}  an example of a  universal link for $\mathbb{S}^2\times\mathbb{S}^1$ (see Definition \ref{Def:UniversalLinkS1xS2}). It will be interesting to have a more complete result:

\begin{problem}\label{Problem:AllUniversalBetti_1NonZeroManifolds}
    Determine all the closed orientable $3$-manifolds $M$ with a link $L$ inside such that every closed, orientable $3$-manifold $N$ with $H^1(N ) \neq 0$  is a branched covering over  $M$  with branching set exactly the link $L.$
\end{problem}

Notice that for some  other topological constrains, e.g. being non-orientable, the analogue of Proposition \ref{Prop:NonExistanceAsphericalUniversalManifolds} and Corollary \ref{Coro:NonExistanceReducibleUniversalManifolds} is false, because Berestein and Edmonds proved in \cite{BeresteinEdmonds:BranchedCovers} that every closed non-orientable $3$-manifold is a branched covering of $P^2\times\mathbb{S}^1,$ but as pointed in \cite{Nuñez:UniversalLinksInS2XS1} there is no example of a universal link for $P^2\times\mathbb{S}^1. $ Then we have the following question:

\begin{problem}\label{Problem:ExistanceNonOrientableUniversalLink}
     Decide if there exist a link $L$ inside $P^2\times\mathbb{S}^1,$  such that every closed, non-orientable $3$-manifold $N$ is a branched covering over  $P^2\times\mathbb{S}^1$  with branching set exactly the link $L.$
\end{problem}

Following the opposite direction of Theorem \ref{Thm:NonExistanceRedUniversalManifolds}, we have that $\mathbb{S}^2\times\mathbb{S}^1$ is a $2$-folds cover  $P^3\# P^3,$ then every closed, orientable $3$-manifold $N$ with $H^1(N ) \neq 0$  is a branched cover over $P^3\# P^3.$ Then we could ask:

\begin{problem}\label{Problem:P3P3BranchedCovers}
      Identify which closed $3$-manifold branch cover $P^3\# P^3.$
\end{problem}

In the topic of complement universal link, we saw in Corollary \ref{Cor:UniversalLinkNotImplyUniversallyBranching}, the notions of universal link and universal branching link are not equivalent for non-spherical $3$-manifolds, but one could ask whether these notions coincide in the case of spherical $3$-manifolds.

\begin{question}
    Are there complement universal links in spherical $3$-manifolds that are not universal?
\end{question}

In the proof of Theorem~\ref{Thm:SphericalAreUniversal}, we construct universal knots in spherical 3-manifolds by taking a universal knot in $\mathbb{S}^3$ and embedding it into the manifold. 
This naturally raises the question of whether all universal knots in spherical manifolds arise in this way. 
More precisely, we can ask:

\begin{question}
Do there exist universal knots in spherical 3-manifolds that are not contained in a 3-ball?
\end{question}

\bibliography{references}
\bibliographystyle{amsplain}
\end{document}